\numberwithin{equation}{section}
\newtheorem{theorem}{Theorem} 
\newtheorem{proposition}[theorem]{Proposition}
\newtheorem{lemma}{Lemma}
\theoremstyle{remark}
\DeclareMathOperator{\supp}{supp\,}
\def\be{\begin{equation}}
\def\ee{\end{equation}}
\def\ve{\varepsilon}
\begin{document}

\title[An application of group expansion]
{An application of group expansion to the Anderson-Bernoulli model}
\author
{J.~Bourgain}
\address
{Institute for Advanced Study, Princeton, NJ 08540}
\email
{bourgain@ias.edu}
\date{\today}

\begin{abstract}
We establish smoothness of the density of states for 1D lattice Schr\"odinger operators with potential taking values $\pm \lambda$,
for $\lambda$ in a class of small algebraic numbers and energy $E\in )-2, 2($ suitably restricted away from $\pm2$.
\end{abstract}
\maketitle

\section*
{\bf 0. Introduction}

Let $H=\Delta+\lambda V$, where $\Delta$ is the lattice Laplacian on $\mathbb Z$ and $V_z=(V_n)_{n\in\mathbb Z}$ are
independent variables in $\{1, -1\}$.
The spectral theory of this operator, referred to as the Anderson-Bernoulli model (A-B for short) has been studied
by various authors.
It was shown by Halperin \cite {S-T} that for fixed $\lambda$, the integrated density of states (IDS) $\mathcal N(E)$ of
$H$ is not H\"older continuous of any order $\alpha$ larger than
\be\label{0.1}
\alpha_0 =\frac {2\log 2} {\text {Arccosh $(1+\lambda)$}}.
\ee
H\"older regularity for some $\alpha>0$ has been established in several papers.

In \cite{Ca-K-M}, le Page's method is used.
Different approaches (including the super-symmetric formalism) appear in the paper \cite{S-V-W} that  relies on 
harmonic analysis principles around the uncertainty principle.
Recently \cite{B1}, the author showed that $\mathcal N(E)$ restricted to $\delta<|E|<2-\delta$ $(\delta>0$ fixed)
is at least H\"older-regular of exponent $\alpha(\lambda) \overset{\lambda\to 0}\rightarrow 1$.

It is believed that in fact for $\lambda\to 0$, $\mathcal N(E)$ becomes arbitrarily smooth and in particular
$\frac{d\mathcal N(E)}{dE}$ is bounded for $|\lambda|$ small enough.
No result of this type for the A-B model seems presently known.
Recall also Thouless formula relating  $\mathcal N(E)$ with the Lyapounov exponent $L(E)$ of $H$, i.e.
\be\label{0.2}
L(E)=\int\log |E-E'| d\mathcal N(E').
\ee
Since $\mathcal N(E)$ is obtained as the Hilbert transform of $L(E)$, their regularity properties may be derived
from each other.

The purpose of this Note is to prove the following in support of the above conjecture.
\medskip

\noindent
{\bf Theorem.}
{\it Let $H_\lambda$ be the A-B model considered above and restrict $|E|<2-\delta$ for some fixed $\delta>0$.
Given a constant $C>0$ and $k\in\mathbb Z_+$, there is some $\lambda_0=\lambda_0 (C, k)>0$ such that
$\mathcal N(E)$ is $C^k$-smooth on $]-2+\delta, 2-\delta[$ provided $\lambda$ satisfies the following 
conditions
\begin{itemize}
\item[(0.3)] \ $ |\lambda|<\lambda_0$
\item[(0.4)] \ $\lambda$ is an algebraic number of degree $d<C$ and minimal polynomial $P_d(x)\in\mathbb Z[X]$ with
coefficients bounded by $(\frac 1\lambda)^C$
\item[(0.5)] \ $\lambda$ has a conjugate $\lambda'$ of modulus $|\lambda'|\geq 1$
\end{itemize}
}

This seems in particular to be the first statement of Lipschitz behavior of the IDS for an A-B model.
Several comments are in order.
Firstly, the arithmetic assumptions on $\lambda$ permit to exploit a spectral gap theorem for the projective
action $\rho$ of $SL_2(\mathbb R)$ on $P_1(\mathbb R)$ that was established in \cite{B-Y} and which is our
main tool (cf. also the application in \cite{B2} of the latter result to regularity of Furstenberg measures).
This spectral gap property is not a consequence of hyperbolicity but is obtained by an adaptation to $SL_2(\mathbb R)$
of the arguments from \cite{B-G} on spectral gaps in $SU(2)$, established by methods from
arithmetic combinatorics (we will not elaborate on these aspects here; see also \S 4).
In its abstract setting, the result from \cite{B-Y} may be formulated as follows.
We identify $P_1(\mathbb R)$ with the torus $\mathbb T=\mathbb R/\mathbb Z$.

\begin{proposition}
\cite{B-Y}.

Given a constant $0<c<1$, there is $R_0\in\mathbb Z_+$ such that the following holds.
Let $ R> R_0$ and $\mathcal G\subset SL_2( R), |\mathcal G|= R$ generating
freely the free group $F_{R}$ on $ R$ generators.
Assume moreover

\begin{itemize}
\item [(0.6)] \ $\Vert g-e\Vert < R^{-c} $ for $g\in\mathcal G$
\item[(0.7)] \ $\mathcal G$ satisfies the following `non commutative diophantine condition'.
Denote $W_\ell(\mathcal G)\subset SL_2(\mathbb R)$ the set of words of length at most $\ell$
written in the $\mathcal G$-elements.  Then, for all $\ell \in\mathbb Z_+$
$$
\Vert g-e\Vert> R^{-\ell/c} \text { for } g\in W_\ell(\mathcal G)\backslash \{e\}.
$$
\end{itemize}

Then there is a finite dimensional subspace $V$ of $L^2(\mathbb T)$, that may be taken
$$
V=[e(n\theta); |n|<K] \quad (e(n\theta)= e^{2\pi i n\theta})
$$
where $K=K(R)\in\mathbb Z$ large enough,  such that if $f\in L^2(\mathbb T), \Vert f\Vert_2=1$ and
$f\bot V$, then
$$
\Big\Vert \frac 1{2R} \sum_{g\in\mathcal G}(\rho_gf +\rho_{g^{-1}} f)\Big\Vert_2
< \frac 12.\eqno{(0.8)}
$$
\end{proposition}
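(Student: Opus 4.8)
The plan is to run the Bourgain--Gamburd spectral-gap scheme of \cite{B-G}, transplanted from the compact group $SU(2)$ to the action of $SL_2(\mathbb R)$ on $P_1(\mathbb R)\cong\mathbb T$. Write $T=\frac1{2R}\sum_{g\in\mathcal G}(\rho_g+\rho_{g^{-1}})$, a self-adjoint contraction on $L^2(\mathbb T)$, and let $\nu$ be the uniform probability measure on $\mathcal G\cup\mathcal G^{-1}\subset SL_2(\mathbb R)$, so that $T^\ell f=\int\rho_x f\,d\nu^{(\ell)}(x)$, $\nu^{(\ell)}$ the $\ell$-fold convolution. Two observations fix the shape of the statement. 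Since $\Vert g-e\Vert<R^{-c}$ for $g\in\mathcal G$, the M\"obius map $\rho_g$ displaces points of $\mathbb T$ by $O(R^{-c})$, hence $\Vert\rho_g e(n\theta)-e(n\theta)\Vert_\infty\lesssim|n|R^{-c}$ and $T$ acts almost trivially on modes with $|n|\ll R^{c}$; this forces a cutoff, and one takes $K=K(R)$ a fixed power $R^{A}$, $A=A(c)>c$. Moreover the bound $\frac12$ (rather than $1-\varepsilon$) is affordable because $T$ averages over $\gtrsim R\to\infty$ operators: the contraction produced below improves without limit with $R$ and beats $2^{-2N}$ once $R>R_0$.

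The heart is an $L^2$-flattening statement for the convolutions $\nu^{(\ell)}$ at a small scale $\delta=\delta(R)$, chosen with $\delta\ll R^{-c}$ and $K\lesssim 1/\delta$; by $(0.6)$ the whole walk lives in a ball $B=B(e,r)$ with $r\sim\ell R^{-c}$. Two inputs drive the machine. (i) \emph{Initial non-concentration}: $\mathcal G$ freely generates $F_R$, so $\nu^{(\ell)}$ charges $\sim R^{\ell}$ distinct reduced words, and by the non-commutative Diophantine condition $(0.7)$ any two of them differ by at least $R^{-\ell/c}$ in $SL_2(\mathbb R)$; for $\ell_0\sim\frac{c}{2}\log(1/\delta)/\log R$ these points are $\delta$-separated, so $\nu^{(\ell_0)}$ is genuinely spread at scale $\delta$ and is \emph{not} concentrated on the $\delta$-neighbourhood of any proper closed subgroup (a torus, a unipotent line, or a torus-normaliser) --- such concentration would force a positive fraction of the words to cluster along a one-parameter subgroup, which freeness and the same bound from $(0.7)$ forbid at scale $\delta$. (ii) \emph{Flattening}: by the product theorem for approximate subgroups of $SL_2(\mathbb R)$ at scale $\delta$ --- the $SL_2(\mathbb R)$-analogue of the Helfgott / Bourgain--Gamburd dichotomy, whose only obstructions are neighbourhoods of $\{e\}$, a torus, a unipotent, or a torus-normaliser --- one gets $\Vert\nu^{(\ell)}*\nu^{(\ell)}\Vert_{2,\delta}\le\Vert\nu^{(\ell)}\Vert_{2,\delta}^{1+\eta}$ for a fixed $\eta=\eta(c)>0$, as long as $\nu^{(\ell)}$ is not yet within $\delta$ of the normalized restriction of Haar measure to $B$; the proper-subgroup alternatives are ruled out by (i). Iterating the flattening $O(\log(1/\delta))$ times from $\ell_0$ yields $N=N(R)$ with $\nu^{(2N)}$ indistinguishable from that Haar measure by test functions of scale $\ge\delta$.

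Finally one transfers to $L^2(\mathbb T)$. For $f\perp V$ with $\Vert f\Vert_2=1$, write $\Vert T^{N}f\Vert_2^2=\langle T^{2N}f,f\rangle=\int\langle\rho_x f,f\rangle\,d\nu^{(2N)}(x)$; pushing $\nu^{(2N)}$ forward by $x\mapsto x^{-1}\theta$ and using flatness on $B$, this becomes --- up to the (harmless) Jacobian of the $P_1$-action --- $f$ convolved on $\mathbb T$ with a bump $\psi_r$ of width $r\gtrsim R^{-c}$, so $\langle T^{2N}f,f\rangle\approx\sum_{K\le|n|\lesssim 1/\delta}|\hat f(n)|^2\,\widehat\psi_r(n)$ plus a negligible error, with $\widehat\psi_r(n)$ small since $|n|\ge K\gtrsim R^{c}$; the still higher modes $|n|\gtrsim1/\delta$ are controlled directly by stationary phase for the M\"obius maps $\rho_z$, $z\in W_{2N}(\mathcal G)\setminus\{e\}$ (using $(0.7)$ to exclude a resonant frequency and the near-identity bound to treat the elliptic $z$). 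Hence $\Vert T^{N}f\Vert_2$ is small; leakage of $T^\ell$ from high to low Fourier modes is below the flattening rate by analyticity of M\"obius maps, so $V^\perp$ is $T$-invariant up to $e^{-\Omega(K)}$ errors and the spectral theorem gives $(0.8)$ for $R>R_0$. The principal obstacle is (ii): the flattening lemma requires the full arithmetic-combinatorics machinery --- classification of approximate subgroups of $SL_2(\mathbb R)$ at scale $\delta$ together with a Balog--Szemer\'edi--Gowers argument --- and, interlocked with it, a careful matching of the scales $R^{-\ell/c}$, $\delta$, $r$ and $1/K$ so that conditions $(0.6)$--$(0.7)$ exclude every concentration scenario uniformly in $R$; the initial non-concentration and the passage back to $\mathbb T$ are comparatively routine, but must be carried out with constants independent of $\mathcal G$.
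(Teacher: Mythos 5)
This proposition is not proved in the paper at all: it is imported verbatim from \cite{B-Y}, and the author explicitly declines to elaborate on the underlying expansion machinery (``we will not elaborate on these aspects here''). So there is no in-paper proof to compare against. Your proposal correctly identifies the strategy of the cited source --- the Bourgain--Gamburd scheme of \cite{B-G} transplanted from $SU(2)$ to the projective action of $SL_2(\mathbb R)$ on $\mathbb T$, in the usual three stages: initial non-concentration of $\nu^{(\ell_0)}$ from freeness plus the Diophantine condition (0.7); $L^2$-flattening of the convolutions at a scale $\delta$ via a product theorem for approximate subgroups of $SL_2(\mathbb R)$; and a transfer from measure flattening to the operator bound (0.8). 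Your explanation of why the cutoff space $V$ is unavoidable (low Fourier modes are nearly fixed since $\Vert g-e\Vert<R^{-c}$) is also the right one. As a roadmap, this is faithful.

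As a proof, however, it has genuine gaps at exactly the points you wave through. First, escape from $\delta$-neighbourhoods of proper closed subgroups does not follow from $\delta$-separation of the $\sim R^{\ell}$ words: separated points can perfectly well all lie within $\delta$ of a one-parameter subgroup. The standard fix (proper closed subgroups of $SL_2$ are virtually solvable, hence satisfy a word law that reduced words in a free group violate, with (0.7) making the violation quantitative at scale $\delta$) is asserted but not executed, and the constants here must be matched to the scales $\delta$, $R^{-\ell/c}$ uniformly in $R$. Second, the flattening step invokes a scale-$\delta$ product theorem in the \emph{non-compact} group $SL_2(\mathbb R)$, which is itself a major theorem and not something one can cite as folklore in this setting. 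Third, and most seriously, the endgame is not the spectral theorem plus a ``harmless Jacobian'': in the $SU(2)$ argument the passage from $\Vert\nu^{(2N)}\Vert_2$ to the operator norm uses the high multiplicity of non-trivial irreducibles in the regular representation, and no such multiplicity argument is available for the quasi-regular representation $\rho_g f=(\tau_{g^{-1}}')^{1/2}(f\circ\tau_{g^{-1}})$ on $L^2(P_1(\mathbb R))$. Controlling the Jacobian factors and the low-to-high frequency leakage is precisely why the conclusion of \cite{B-Y} is only a \emph{restricted} gap on $V^{\bot}$, and these estimates cannot be dispatched by ``analyticity of M\"obius maps.'' The proposal is the right plan, but the statement should continue to be treated as a citation rather than as proved.
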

In the construction from \cite{B-Y}, the elements of $\mathcal G$ have rational entries, more
precisely, $\mathcal G\subset SL_2(\mathbb R)\cap \frac 1Q \text {Mat}_{2\times 2} (\mathbb Z)$ with
$Q\in \mathbb Z_+$ satisfying
\addtocounter{equation}{6}
\be\label{0.9}
Q^c <|\mathcal G|<R<Q.
\ee
Obviously $\Vert g-e\Vert\geq Q^{-\ell}$ for $g\in W_\ell(\mathcal G)\backslash \{e\}$ and in this way
we obtain condition (0.7).
In the application in this paper, $\mathcal G$ will consist of algebraic elements of bounded degree
$d<C$ and height bounded by $R^C$.
The required diophantine condition follows then from \cite{G-J-S}, Proposition 4.3, again invoking 
simple arithmetic considerations.
Presently, the \cite{G-J-S} argument seems the only known one to establish such non-commutative DC
and it is a major problem in this area of group expansion to treat non-algebraic generators.
This explains why in (0.4), $\lambda$ was assumed algebraic.
Let us next explain assumption (0.5), which in some sense is the novel input.
Denote for a fixed $E\in ]-2+\delta, 2-\delta[$
\be\label{0.10}
g_+=\begin{pmatrix} E+\lambda &-1\\ 1&0\end{pmatrix} \quad g_- =\begin{pmatrix} E-\lambda&-1\\ 1&0\end{pmatrix}.
\ee
Clearly
\begin{align}\label{0.11}
h_1& =g_+g_-^{-1} =\begin{pmatrix} 1&2\lambda\\ 0&1\end{pmatrix}\nonumber\\
h_2&= g_+^{-1} g_- =\begin{pmatrix} 1&0\\2\lambda&1\end{pmatrix}.
\end{align}

We use the following result due to Brenner \cite{Br}.

\begin{proposition}\label{Proposition2} (\cite{Br}).

If $\mu \in\mathbb R, |\mu|\geq 2$, then the group generated by the parabolic elements
$$
A=\begin{pmatrix} 1&\mu\\ 0&1\end{pmatrix} \text { and } \ B=\begin{pmatrix} 1&0\\ \mu&1\end{pmatrix}
$$
is free.
\end{proposition}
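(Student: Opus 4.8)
The plan is to prove this by the classical ping-pong (``table-tennis'') argument, the case $\mu=2$ being Sanov's theorem. Since $\mu\neq 0$, both $A$ and $B$ have infinite order, so it suffices to produce two disjoint nonempty subsets $X_A,X_B\subset\mathbb R^2$ with
$$
A^n(X_B)\subseteq X_A \quad\text{and}\quad B^n(X_A)\subseteq X_B \qquad \text{for all } n\in\mathbb Z\setminus\{0\};
$$
the ping-pong lemma then shows that every nontrivial reduced word in $A^{\pm1},B^{\pm1}$ moves a suitable test vector and hence is $\neq I$, i.e. the natural map $F_2\to\langle A,B\rangle$ is an isomorphism.

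Concretely I would take the ``horizontal'' and ``vertical'' cones
$$
X_A=\{(x,y)\in\mathbb R^2:\ |x|>|y|\},\qquad X_B=\{(x,y)\in\mathbb R^2:\ |x|<|y|\},
$$
which are disjoint. For $(x,y)\in X_B$ (so $y\neq 0$) and $n\neq 0$ one has $A^n(x,y)=(x+n\mu y,\,y)$ and
$$
|x+n\mu y|\ \geq\ |n\mu|\,|y|-|x|\ >\ (|n\mu|-1)|y|\ \geq\ (|\mu|-1)|y|\ \geq\ |y|,
$$
using $|x|<|y|$, $|n|\geq 1$ and $|\mu|\geq 2$; hence $A^n(x,y)\in X_A$. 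The inclusion $B^n(X_A)\subseteq X_B$ is the same computation with the two coordinates exchanged. This is the only point at which $|\mu|\geq2$ is used, and it is exactly what makes the estimate close (with room to spare, since the defining inequality of $X_B$ is strict, the borderline value $|\mu|=2$ causes no trouble).

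To conclude I would run the standard case analysis on a reduced word $w=s_1\cdots s_r$ whose syllables $s_i$ are alternating nonzero powers of $A$ and of $B$. If $w$ begins and ends with a power of $A$, apply it to $v=(0,1)\in X_B$: reading from the right and alternately using the two inclusions above gives $wv\in X_A$, so $wv\neq v$ and $w\neq I$; symmetrically, if $w$ begins and ends with a power of $B$, test against $v=(1,0)\in X_A$. If $w$ begins with a power of $A$ and ends with a power of $B$ (or vice versa), replace $w$ by the conjugate $A^m w A^{-m}$ (resp.\ $B^m w B^{-m}$) for an integer $m\notin\{0,-a_1\}$ (where $A^{a_1}$ is the first syllable), chosen so that no cancellation occurs at the two ends; the conjugate is then a reduced word of one of the previous two types, hence $\neq I$, and therefore $w\neq I$. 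Thus $\langle A,B\rangle$ is free on $A,B$. The only real ``obstacle'' here is bookkeeping — keeping the ping-pong inclusions strict and reducing the mixed start/end case to the pure ones by conjugation — so the argument is elementary and self-contained, using nothing beyond the explicit matrix action on $\mathbb R^2$.
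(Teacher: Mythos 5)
Your argument is correct and complete. The paper itself gives no proof of this proposition --- it is quoted verbatim from Brenner's note \cite{Br} (and its algebraic-conjugate extension from \cite{L-U} is what is actually invoked later) --- so there is nothing internal to compare against; what you have written is the standard ping-pong (Sanov-type) proof, and all the delicate points are handled properly: the cones $X_A=\{|x|>|y|\}$ and $X_B=\{|x|<|y|\}$ are disjoint, the estimate $|x+n\mu y|\geq |n\mu|\,|y|-|x|>(|\mu|-1)|y|\geq |y|$ is strict even at the borderline $|\mu|=2$ precisely because the defining inequality of $X_B$ is strict, and the reduction of words with mixed initial/terminal syllables to the pure cases by conjugating with $A^m$, $m\notin\{0,-a_1\}$, is the correct way to avoid the usual hypothesis of the two-subgroup ping-pong lemma that one factor have order at least $3$. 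You also prove the sharper statement that $A,B$ are \emph{free generators} (not merely that the group is free of some rank), which is the form actually needed in Section 1 of the paper, where $h_1,h_2$ must freely generate $F_2$ so that the words $g_r=h_1^rh_2^r$ freely generate $F_R$.
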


As pointed out in \cite{L-U}, the same conclusion holds if $\mu$ is an algebraic number with an algebraic conjugate $\mu'$ such that
$|\mu'|\geq 2$.
Hence, if $\lambda$ satisfies (0.5), the elements $h_1, h_2$ defined in \eqref{0.11} will generate a free group.
The set $\mathcal G$ in Proposition 1 is then obtained by considering elements $h_1^r h_2^r$, $r=1, \ldots, R$.
Using Proposition 1, we prove that
\be\label{0.12}
\Vert f- \rho_{g_+} f\Vert_2+ \Vert f-\rho_{g_-} f\Vert_2>\frac 18 \lambda^\tau
\ee
if $f\in L^2(\mathbb T), \Vert f\Vert_2=1, f\in V^\bot$.

Here $\tau>0$ is arbitrary and fixed, $|\lambda|$ taken sufficiently small depending on $\tau$ (for our purpose,
$\tau <\frac 12$ will do).
Note that the inequality \eqref{0.12}, restricted to $f\in V^\bot, \Vert f\Vert _2 =1$, is considerably stronger 
than the general inequality (cf. \cite{S-V-W}, Theorem 4.1)
\be\label{0.13}
\Vert f-\rho_{g_+} f\Vert_2 +\Vert f-f_{g_-} f\Vert_2> c|\lambda|
\ee
if $f\in L^2(\mathbb T), \Vert f\Vert_2=1$.

From \eqref{0.12}, we derive a restricted spectral gap for the operator \hfill\break
$\frac 14(I+\rho_{g_+}+\rho_{g_-})$. i. e.
\be\label{0.14}
\Big\Vert\frac 13 (f+\rho_{g_+} f+\rho_{g_-} f)\Big\Vert_2\leq (1-c\lambda^{2\tau}) \Vert f\Vert_2 \ 
\text { for } f\in V^\bot
\ee
and \eqref{0.14} is then processed further to derive certain smoothing estimates for the convolution powers (cf. \cite{B2}),
from which eventually the regularity of the Lyapounov exponent is derived.

Some comments about the energy restriction $|E|<2-\delta$.
At some stage of our analysis, we make use of the Figotin-Pastur transformation, setting
\be\label{0.15}
E=2\cos \kappa \ (0<\kappa< \pi)
\ee
and conjugating the cocycle by the matrix
\be\label{0.16}
S=\frac 1{(\sin \kappa)^{\frac 12}} \begin{pmatrix} 1&-\cos\kappa\\ 0&\sin\kappa\end{pmatrix}.
\ee
This gives
\be\label{0.17}
S g_{\pm} S^{-1} =\begin{pmatrix} \cos\kappa & -\sin\kappa\\ \sin \kappa& \cos\kappa\end{pmatrix}
\pm\lambda \begin{pmatrix} 1&\frac {\cos\kappa}{\sin\kappa}\\ 0&0\end{pmatrix}
\ee
which for small $\lambda$ are perturbations of a rotation.
We did not explore  here how to handle the edges of the spectrum.

Finally, let us point out that while $\lambda$ is taken small, we do not let $\lambda\to 0$ in the above
Theorem and the regularity estimates on $\mathcal N(E)$ degenerate in the limit $\lambda\to 0$. 

\section
{\bf A spectral gap estimate}

In this section, we prove the following

\begin{proposition}\label{Proposition3}
Fix constants $C>1, 0<\tau<\frac 12$. 
Let $\lambda$ be an algebraic number of degree $d<C$ and with minimal polynomial $P_d (x) =\sum^d_{j=0} a_j x^j \in \mathbb Z[X]$.
Assume
\begin{itemize}
\item[(1.1)] \ $|\lambda|, \lambda_0 =\lambda_0(C, \tau)<\frac 1{10}$
\item[(1.2)] \ $H=\max |a_j|<\big (\frac 1\lambda\big)^C$
\item[(1.3)] \ $\lambda$ has an algebraic conjugate $\lambda'$ with $|\lambda'|\geq 2$.
\end{itemize}

Denote

$$
h_1 =\begin{pmatrix} 1&\lambda\\ 0&1 \end{pmatrix}
\text { and } \ h_2=\begin{pmatrix} 1&0\\ \lambda&1\end{pmatrix}
$$
and let $\rho$ be the projective representation of $SL_2(\mathbb R)$, acting on $L^2(\mathbb T)$.
There is a finite dimensional space $V=[e(n\theta); |n|< K]$, where $K=K(\lambda)$, such that if
$f\in L^2(\mathbb T), \Vert f\Vert_2=1$ and $f\bot V$, then
$$
\Vert f-\rho_{h_1} f\Vert_2+\Vert f-\rho_{h_2} f\Vert_2 >\frac 14\lambda^\tau.
\eqno{(1.4)}
$$
\end{proposition}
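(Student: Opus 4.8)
The plan is to apply Proposition~1 to a generating set $\mathcal G$ built from $h_1,h_2$ and then to transfer the resulting averaged spectral gap back to the two generators by a telescoping estimate. First I record that, by hypothesis (1.3) together with Brenner's Proposition~\ref{Proposition2} and the remark of \cite{L-U}, the matrices $h_1,h_2$ generate a free group $F_2$ inside $SL_2(\mathbb R)$. Next I fix a constant $c=c(C,\tau)\in(0,1)$, to be chosen small at the end, let $R_0=R_0(c)$ be the integer supplied by Proposition~1, and set $R=R(\lambda)=\lceil(1/|\lambda|)^{\tau}\rceil$, which exceeds $R_0$ once $|\lambda|<\lambda_0(C,\tau)$. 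I take
$$
\mathcal G=\{\,g_r:=h_1^r h_2^r\ :\ r=1,\dots,R\,\}\subset SL_2(\mathbb R),\qquad |\mathcal G|=R ,
$$
so that $g_r=\begin{pmatrix}1+r^2\lambda^2 & r\lambda\\ r\lambda & 1\end{pmatrix}$. That the $g_r$ freely generate the free group $F_R$ follows from the freeness of $\langle h_1,h_2\rangle$ by a routine combinatorial check: in a reduced word in the $g_r^{\pm1}$, each junction between consecutive letters leaves behind an uncancelled power $h_i^{k}$ with $k\neq0$, so the word is nontrivial in $F_2$. Condition (0.6) is immediate, since $\|g_r-e\|\le2R|\lambda|<R^{-c}$ whenever $\tau(1+c)<1$ and $|\lambda|$ is small.

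The crux is the non-commutative Diophantine condition (0.7), and this is where the particular shape of $\mathcal G$ is used. Because $h_1^{r}=\begin{pmatrix}1&r\lambda\\0&1\end{pmatrix}$ and $h_2^{r}=\begin{pmatrix}1&0\\r\lambda&1\end{pmatrix}$ are affine in $\lambda$, each $g_r^{\pm1}$ has entries in $\mathbb Z[\lambda]$ that are polynomials in $\lambda$ of degree $\le2$ with coefficients bounded by $r^2\le R^2$; hence a word $w\in W_\ell(\mathcal G)$ has entries in $\mathbb Z[\lambda]$ of $\lambda$-degree $\le2\ell$ and $\ell^1$-coefficient-norm $\le(4R^2)^{\ell}$ — the degree grows only linearly in $\ell$, not in $R\ell$, which is the whole point. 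If $w\neq e$ then $w-e\neq0$ (since $\langle h_1,h_2\rangle$ is free), so some entry $\alpha$ of $w-e$ is a nonzero algebraic number in $\mathbb Q(\lambda)$ of degree $\le d<C$. Using that $a_d\lambda$ is an algebraic integer together with the Cauchy bound $|\lambda^{(i)}|\le1+H$ on the conjugates of $\lambda$, its naive height is at most $(c_0RH)^{c_1\ell}$ with $c_0,c_1$ depending only on $C$, so the Liouville inequality yields $\|w-e\|\ge\tfrac1{\sqrt2}|\alpha|\ge D^{-\ell}$ where $\log D\le c_3\log(1/|\lambda|)$ for some $c_3=c_3(C)$ (here I also use $H<(1/|\lambda|)^C$ and $R\le2(1/|\lambda|)^{\tau}$) — this is exactly the mechanism behind \cite{G-J-S}, Proposition~4.3. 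Since also $R\ge(1/|\lambda|)^{\tau}$, the estimate $D^{c}<R$ — equivalently $\|w-e\|>R^{-\ell/c}$ for every $\ell$ — follows as soon as $c\,c_3<\tau$, which holds by taking $c=c(C,\tau)$ small. Thus $\mathcal G$ satisfies all the hypotheses of Proposition~1.

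Finally, Proposition~1 produces $V=[e(n\theta);|n|<K]$ with $K=K(R)=K(\lambda)$ such that $\big\|\tfrac1{2R}\sum_{g\in\mathcal G}(\rho_gf+\rho_{g^{-1}}f)\big\|_2<\tfrac12$ whenever $f\bot V$ and $\|f\|_2=1$. Since $\rho$ is unitary, $\|f-\rho_{g^{-1}}f\|_2=\|f-\rho_gf\|_2$, so $\|f\|_2=1$ and the triangle inequality force $\tfrac1R\sum_{r=1}^R\|f-\rho_{g_r}f\|_2>\tfrac12$, hence $\|f-\rho_{g_r}f\|_2>\tfrac12$ for some $r$. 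Writing $\rho_{g_r}=\rho_{h_1}^r\rho_{h_2}^r$ and telescoping each power yields $\|f-\rho_{g_r}f\|_2\le r\big(\|f-\rho_{h_1}f\|_2+\|f-\rho_{h_2}f\|_2\big)\le R\big(\|f-\rho_{h_1}f\|_2+\|f-\rho_{h_2}f\|_2\big)$, so that $\|f-\rho_{h_1}f\|_2+\|f-\rho_{h_2}f\|_2>\tfrac1{2R}\ge\tfrac14\lambda^{\tau}$, which is (1.4). I expect the main obstacle to be the verification of (0.7): one must exploit the linearity of $h_i^{r}$ in $\lambda$ to keep the $\mathbb Q(\lambda)$-degree — hence the height — of the word entries polynomial in the word length alone, and then simultaneously fit the three constraints on the exponent of $R$ that come from (0.6), from the Liouville bound in (0.7), and from the target exponent $\tau$, by a suitable choice of $c$ and $R$.
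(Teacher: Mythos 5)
Your proposal is correct and follows essentially the same route as the paper: the same generating set $g_r=h_1^rh_2^r$ with $R\sim|\lambda|^{-\tau}$, freeness via Brenner/Lyndon--Ullman, the Gamburd--Jacobson--Sarnak height argument (degree of word entries linear in $\ell$, coefficients $\le (CR)^{C\ell}$, Liouville bound) to verify the Diophantine condition for Proposition~1, and the telescoping $\Vert f-\rho_{g_r}f\Vert_2\le r(\Vert f-\rho_{h_1}f\Vert_2+\Vert f-\rho_{h_2}f\Vert_2)$ to pass from the averaged gap back to the two generators. The only cosmetic difference is that you argue directly from the averaged bound while the paper argues by contradiction from (1.5), and you supply slightly more detail on why the $g_r$ freely generate $F_R$.
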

\medskip

By (0.11), Proposition 3 implies (0.12) for $\lambda$ satisfying assumption (0.5) of the Theorem.

\medskip

\noindent
{\bf Proof of Proposition 3.}

The argument relies on Proposition 1 and 2 stated in Section 0.

Let $f$ be as above (with $K$ to be specified) and assume
$$
\Vert f-\rho_{h_1} f\Vert_2<\ve_0, \Vert f-\rho_{h_1} f\Vert_2 < \ve_0.\eqno{(1.5)}
$$
Denoting $W_\ell(h_1, h_2)$ the words of length at most $\ell$ written in $h_1, h_2$ and their inverses, it follows from (1.5)
that
$$
\Vert f-\rho_g f\Vert_2<\ell \ve_0 \text { for } \ g\in W_\ell(h_1, h_2).\eqno{(1.6)}
$$
By Proposition 2 and (1.3), $h_1, h_2$ are generators of the free group $F_2$.
Let
$$
R=[|\lambda|^{-\tau}]\eqno{(1.7)}
$$
and define for $r=1, \ldots, R$
$$
g_r = h_1^r h_2^r =\begin{pmatrix} 1&r\lambda\\ 0&1\end{pmatrix} \ \begin{pmatrix} 1&0\\ r\lambda&1\end{pmatrix}.
\eqno{(1.8)}
$$
Then $\mathcal G=\{ g_1, \ldots, g_R\}$ are free generators of $F_R$ and clearly satisfy
$$
\Vert 1-g\Vert<\lambda^{\frac 12} \text { for } \ g\in\mathcal G.\eqno{(1.9)}
$$
In order to apply Proposition 1, we need to verify the DC (0.7).
This is basically Proposition 4.3 from \cite{G-J-S}, but we recall the argument since the quantitative aspects of the
estimate matter here.

Take $N\in\mathbb Z_+$, $N\leq H$ such that $N\lambda =\mu\in\mathcal O =\mathcal O_{\mathbb Q(\lambda)}$ (the integers of
the number field $\mathbb Q(\lambda)$).
If $w\in W_\ell(\mathcal G)$, the entries of $w-1$ are, by (1.8), of the form $f(\lambda)$ with $f(x)\in \mathbb Z[X]$
of degree $D\leq 2\ell$ and coefficients bounded by $(2+R)^{2\ell}$.
Let $\lambda=\lambda_1, \lambda_2, \ldots, \lambda_d$ be the conjugates of $\lambda$ and set $\mu_j= N\lambda_j$ $(1\leq j\leq d)$
which are the conjugates of $\mu$.
Thus $N^Df(\lambda_j)=f_1(\mu_j)$ where $f_1(X)=N^d f\big(\frac XN\big) \in\mathbb Z[X]$.
Assuming $f(\lambda)\not= 0$, it follows that $\prod^d_{j=1} f_1(\mu_j)\in \mathbb Z\backslash \{0\}$ and hence
$$
|f_1 (\mu)|\geq N^{-(d-1)D}\prod^d_{j=2} |f(\lambda_j)|^{-1}.\eqno{(1.10)}
$$
Since $|\lambda_j| \leq H+1, |f(\lambda_j)|\leq (2+R)^{2\ell} (H+1)^{2\ell}$ and by (1.10), (1.7), (1.2)
$$
\Vert w-1\Vert\geq |f(\lambda)|\geq N^{-dD} [(2+R)(1+H)]^{-2\ell(d-1)}> R^{-4(\frac C\tau+1)d\ell}=R^{-C'\ell}
$$

Taking $|\lambda|< \lambda_0(C, \tau)$, we get $R>R_0$ and the conclusion of Proposition 1 applies with some $K$ depending
on the size of $\lambda$.

From (0.8), it follows in particular that for some $g\in\mathcal G\subset W_{2R}(h_1, h_2)$
$$
\frac 12 <\Vert f-\rho_g f\Vert_2 < 2R\ve_0
$$
implying (1.4).
This proves Proposition 3.\hfill$\square$
\medskip

In the sequel, we will use (0.12) for some fixed $\tau<\frac 12$. 

\section
{\bf Smoothing estimates}

For $g\in SL_2(\mathbb R)$, denote by $\tau_g$ the action on $\mathcal P_1(\mathbb R)$, identified  with the circle
$\mathbb T=\mathbb R/\mathbb Z$.
Thus if $g=\begin{pmatrix} a&b\\ c&d\end{pmatrix}, ad-bc=1$, then
$$
e^{i\tau_g(\theta)}=\frac {(a\cos \theta+b\sin\theta)+ i(c\cos\theta+d\sin\theta)}{[(a\cos\theta+ b\sin\theta)^2 +(c\cos\theta+
d\sin\theta)^2]^{\frac 12}}\eqno{(2.1)}
$$
and $\rho_gf=(\tau_{g^{-1}}')^{\frac 12}(f\circ \tau_{g^{-1}})$. Recall that
$$
\tau_g'(\theta) =\frac {\sin^2\tau_g(\theta)}{(c\cos \theta+ d\sin\theta)^2} =\frac 1{(a\cos \theta+b\sin\theta)^2 +
(c \cos \theta +d\sin \theta)^2}\eqno{(2.2)}
$$
hence
$$
\Vert g\Vert^{-2}\leq \tau_g' \leq \Vert g\Vert^2 \text { and } \ |\tau_g^{(s)}|\leq c_s\Vert g\Vert^{2s}
\text{ for } s\in\mathbb Z_+.\eqno{(2.3)}
$$
Assume $|E|<2-\delta$ and perform the Figotin-Pastur transformation (0.15)-(0.17) denoting $\tilde g_\pm =
Sg_\pm S^{-1}$.
Since $\rho_{\tilde g_\pm}=\rho_S \rho_{g_\pm} \rho_{S^{-1}}$, it follows from (0.12)
that
$$
\Vert f-\rho_{\tilde g_+}f\Vert_2 +\Vert f-\rho_{\tilde g_-} f\Vert_2> \frac 18\lambda^\tau\eqno{(2.4)}
$$
provided $\Vert f\Vert_2 =1$, $\rho_{S^{-1}} f\in V^\bot$.
Since $\tau_S$ acts on $\mathbb T$ as a smooth diffeomorphism, the space $V$ may clearly be redefined as to ensure
that (2.4) holds for $f\in V^\bot$, $\Vert f\Vert_2=1$.
Observe also that by (0.17) and our assumption $|E|< 2-\delta$, $\delta$ fixed, $\tilde g_\pm$ are $O(\lambda)$ perturbations of 
a circle rotation.
Hence, by (2.2)
$$
\Vert\tilde g_\pm\Vert< 1+C\lambda\eqno{(2.5)}
$$
$$
\tau_{\tilde g_\pm}' = 1+O(\lambda).\eqno{(2.6)}
$$
Denoting
$$
\tilde T_1 =\frac 13 (I+\rho_{(\tilde g_+)^{-1}}+ \rho_{(\tilde g_-)^{-1}})\eqno{(2.7)}
$$
(2.4) implies that
$$
\Vert\tilde T_1f\Vert_2 < 1-\frac 1{2300} \lambda^{2\tau} \text { if } \ f\in V^\bot, \Vert f\Vert_2=1.\eqno{(2.8)}
$$
Since $\rho_{(\tilde g_\pm)^{-1}} f=\big((\tau_{\tilde g_\pm})'\big) ^{\frac 12} (f\circ \tau_{\tilde g_\pm})$, (2.6) clearly implies
(assuming $\lambda$ small enough)
$$
\Vert\tilde T f\Vert_2\leq \Big( 1-\frac 1{2301} \lambda^{2\tau}\Big)\Vert f\Vert_2 \text { for } \ f\in V^\bot\eqno{(2.9)}
$$
where $V=[e(n\theta); |n|< K]$ and we defined
$$
\tilde T f=\frac 13 \big(f+(f\circ\tau_{\tilde g_+})+(f\circ\tau_{\tilde g_-})\big).\eqno{(2.10)}
$$
For simplicity, we drop the $\sim$ notation in the next considerations.

Our next goal is to deduce from the contractive estimate (2.9) further bounds on $T^m$ acting on various spaces.
Note that obviously
$$
\Vert T^m f\Vert_\infty \leq \Vert f\Vert_\infty.\eqno{(2.11)}
$$
Let $g\in W_\ell (g_+, g_-), n\in\mathbb Z, n'\in\mathbb Z_*$.
By change of variable and partial integration, we obtain
$$
\begin{aligned}\label{2.12}
\Big|\int e(n' \tau_g (x)+nx)dx\Big|&=\Big|\int e\Big(n'y+n\tau_{g^{-1}} (y)\Big) \tau_{g^{-1}}(y)dy\Big|\nonumber\\
&\ll_r \ \frac 1{|n'|^r} \Vert e(n\tau_{g^{-1}}) \tau_{g^{-1}} ' \Vert_{C^r}\nonumber\\
& \ll_r \ \frac 1{|n'|^r} (|n|^r \Vert g\Vert^{2(r+1)}) \quad \big(\text { by (2.3)}\big)\nonumber\\
\end{aligned}
$$
$$
\qquad\qquad\ll_r \ \frac {|n|^r}{|n'|^r} (1+C|\lambda|)^{2(r+1)\ell}
\eqno{(2.12)}
$$
since $\Vert g\Vert < (1+C\lambda)^\ell $ from (2.5).

\begin {lemma}\label{Lemma1}
$$
\Vert T^m f\Vert_2\leq C(\lambda) \Vert f\Vert_2. \eqno{(2.13)}
$$
\end{lemma}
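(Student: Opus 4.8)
\noindent
The plan is to split $L^{2}(\mathbb T)=V\oplus V^{\perp}$, with $V=[e(n\theta);|n|<K]$ and orthogonal projections $P$ (onto $V$) and $Q=I-P$, and to bound $T^{m}$ on each piece. The ingredients I would use are: the restricted spectral gap (2.9), which I abbreviate $\|Th\|_{2}\le\theta\|h\|_{2}$ for $h\in V^{\perp}$ with $\theta=1-\tfrac{1}{2301}\lambda^{2\tau}<1$; the $L^{\infty}$-contractivity (2.11), $\|T^{j}u\|_{\infty}\le\|u\|_{\infty}$ (valid since $T$ is an average of composition operators); and the elementary Bernstein inequality $\|v\|_{\infty}\le(2K)^{1/2}\|v\|_{2}$ for $v\in V$ (Cauchy--Schwarz on the Fourier side). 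Note that the oscillatory-integral estimate (2.12) is not needed for this lemma; it will enter only in the sharper smoothing bounds.

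On $V$ the bound is immediate: for $v\in V$ and any $j$,
\[
\|T^{j}v\|_{2}\le\|T^{j}v\|_{\infty}\le\|v\|_{\infty}\le(2K)^{1/2}\|v\|_{2},
\]
so $\sup_{j}\|T^{j}|_{V}\|_{2\to2}\le(2K)^{1/2}$. The substance is on $V^{\perp}$, where the obstacle is that $T$ does \emph{not} preserve $V^{\perp}$, so the gap cannot simply be iterated. I would set $B_{j}:=\sup\{\|T^{j}h\|_{2}:h\in V^{\perp},\ \|h\|_{2}=1\}$ and run a recursion: for $h\in V^{\perp}$ write
\[
T^{j}h=T^{j-1}(PTh)+T^{j-1}(QTh),
\]
observe that $\|Th\|_{2}\le\theta$ by (2.9), hence $\|PTh\|_{2}\le\theta$ and $\|QTh\|_{2}\le\theta$; bound the first term by $(2K)^{1/2}\theta$ using the $V$-estimate above (since $PTh\in V$), and the second by $\theta B_{j-1}$ (since $QTh\in V^{\perp}$). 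This gives $B_{j}\le(2K)^{1/2}\theta+\theta B_{j-1}$ with $B_{0}=1$, whence $B_{j}\le\dfrac{(2K)^{1/2}}{1-\theta}+1=2301\,(2K)^{1/2}\lambda^{-2\tau}+1$ for all $j$.

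Combining the two pieces, for general $f$,
\[
\|T^{m}f\|_{2}\le\|T^{m}Pf\|_{2}+\|T^{m}Qf\|_{2}\le\Big((2K)^{1/2}+2301\,(2K)^{1/2}\lambda^{-2\tau}+1\Big)\|f\|_{2},
\]
which is (2.13) with $C(\lambda)$ the bracketed quantity; since $K=K(\lambda)$ this is a finite constant depending only on $\lambda$. The one point requiring care is the recursion on $V^{\perp}$: the mass leaking from $V^{\perp}$ into $V$ at each step could a priori accumulate over the $m$ steps, but each leaked unit already carries the contraction factor $\theta$ and, once in the finite-dimensional space $V$, is controlled uniformly in the number of remaining applications of $T$ by (2.11); the part that stays in $V^{\perp}$ is strictly contracted, so the resulting geometric series converges and the bound is uniform in $m$.
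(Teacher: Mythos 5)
Your proof is correct and follows essentially the same route as the paper: decompose along $V\oplus V^{\perp}$, control the $V$-component uniformly in $m$ via the $L^\infty$-contractivity (2.11) together with $\Vert v\Vert_\infty\le\sqrt{2K}\Vert v\Vert_2$ on $V$, and contract the $V^{\perp}$-component by (2.9) at each step, summing the resulting geometric series to get $C(\lambda)\sim\sqrt{2K}\,\lambda^{-2\tau}$. The only difference is presentational — you package the iteration as a recursion for $B_j$ on $V^{\perp}$, while the paper re-decomposes $f_j=Tf_{j-1}^{(2)}$ at each stage — and the "leakage into $V$" issue you flag is exactly what both arguments handle.
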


\begin{proof}

Denote $P_K$ the orthogonal (= Fourier) projection on $V$ and decompose $f=f^{(1)} + f^{(2)}, f^{(1)} =P_K f, f^{(2)}\bot V$.

Thus
$$
\Vert f^{(1)} \Vert_\infty \leq \sqrt{2K} \Vert f\Vert_2 \text { and } \Vert f^{(2)}\Vert_2 \leq \Vert f\Vert_2
$$
and
$$
\begin{aligned}
\Vert T^mf\Vert_2 &\leq \Vert T^m f^{(1)} \Vert_2 +\Vert T^m f^{(2)} \Vert_2\\
&\leq \Vert T^mf^{(1)} \Vert_\infty + \Vert T^{m-1} f_1\Vert_2\qquad (f_1= Tf^{(2)})\\
&\leq \Vert f^{(1)} \Vert_\infty +\Vert T^{m-1} f_1\Vert_2 \qquad \text {\big(by (2.11)}\big)\\
&\leq \sqrt{2K} \Vert f\Vert_2 +\Vert T^{m-1} f_1\Vert_2 \qquad (2.14)
\end{aligned}
$$
where, by (2.9),
$$
\Vert f_1\Vert_2 \leq (1- c\lambda^{2\tau})\Vert f^{(2)}\Vert_2\leq (1-c\lambda^{2\tau})\Vert f\Vert_2.
$$
Repeat (2.14) with $f$ replaced by $f_1$ and iterate to get
$$
\Vert T^m f\Vert_2 \lesssim \sqrt{2K} \lambda^{-2\tau}\Vert f\Vert_2
$$
proving (2.13).
\end{proof}

There is the following refinement of Lemma 1.

\begin{lemma}\label{Lemma2}
Let $\supp \hat f\cap [-2^k, 2^k]=\phi$ with $k>k(\lambda)$.

Then
$$
\Vert T^mf\Vert_2 \leq C(\lambda) e^{-\min(c\lambda^{2\tau} m, rk)} \Vert f\Vert_2\eqno{(2.15)}
$$
for any given $r\geq 1$ (assuming $\lambda$ small enough).
\end{lemma}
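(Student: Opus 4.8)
\smallskip
\noindent\textbf{Proof of Lemma \ref{Lemma2} (plan).}
The plan is to sharpen the iteration behind Lemma \ref{Lemma1} by following the Fourier supports of the iterates. Since $k>k(\lambda)$ forces $2^k>K$, the hypothesis $\supp\hat f\cap[-2^k,2^k]=\phi$ already puts $f$ in $V^\bot$, so (2.9) applies to $f$ itself. The new ingredient is a \emph{direct} bound on the low-frequency part of $T^jf$. Writing $T^jf$ as $3^{-j}$ times a sum of $3^j$ terms $f\circ\tau_g$, each $g$ a word of length $\le j$ in $g_+,g_-$, expanding $f$ into Fourier modes, applying the oscillatory-integral estimate (2.12) to each term and summing by Cauchy--Schwarz over $\supp\hat f\subset\{|n|>2^k\}$, I would obtain
$$
\Vert P_K(T^jf)\Vert_2\ \ll_r\ K^{r+1}\,2^{-(r-\frac12)k}\,(1+C\lambda)^{2(r+1)j}\,\Vert f\Vert_2 ,
$$
where $P_K$ is the Fourier projection onto $V$, as in the proof of Lemma \ref{Lemma1}. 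Restricting to $j\le m_\ast:=\lfloor rk/(c\lambda^{2\tau})\rfloor$ ($c$ being the constant of (2.9)) makes the exponential factor harmless: $(1+C\lambda)^{2(r+1)j}\le\exp\!\big(2(r+1)Cr\,\lambda^{1-2\tau}k/c\big)\le 2^{rk/4}$ once $|\lambda|$ is small enough, and this is exactly where $\tau<\tfrac12$ enters, as it forces $\lambda^{1-2\tau}$ to be small. Absorbing $K=K(\lambda)$ into the constant, one then has $\Vert P_K(T^jf)\Vert_2\le C_r(\lambda)\,2^{-rk/4}\Vert f\Vert_2$ for $0\le j\le m_\ast$.

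Next I would run a recursion on the splitting $T^jf=u_j+v_j$, with $u_j:=P_K(T^jf)$ and $v_j:=(I-P_K)(T^jf)\in V^\bot$. Applying $T$ and projecting gives
$$
\Vert v_{j+1}\Vert_2\le(1-c\lambda^{2\tau})\Vert v_j\Vert_2+\Vert Tu_j\Vert_2\le(1-c\lambda^{2\tau})\Vert v_j\Vert_2+\sqrt{2K}\,\Vert u_j\Vert_2 ,
$$
where (2.9) is used on $v_j\in V^\bot$, and $\Vert Tu_j\Vert_2\le\Vert Tu_j\Vert_\infty\le\Vert u_j\Vert_\infty\le\sqrt{2K}\,\Vert u_j\Vert_2$ by (2.11) together with the Bernstein bound for $V$ (as in the proof of Lemma \ref{Lemma1}). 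Feeding in the estimate for $\Vert u_j\Vert_2$ above, using $\Vert v_0\Vert_2=\Vert f\Vert_2$ and summing the geometric series, I would get, for $j\le m_\ast$,
$$
\Vert T^jf\Vert_2\le\Vert u_j\Vert_2+\Vert v_j\Vert_2\le(1-c\lambda^{2\tau})^j\Vert f\Vert_2+C_r'(\lambda)\,2^{-rk/4}\Vert f\Vert_2 .
$$

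To conclude I would split into two ranges of $m$. For $m\le m_\ast$ the last inequality is already (2.15), after using $(1-c\lambda^{2\tau})^m\le e^{-c\lambda^{2\tau}m}$, $2^{-rk/4}=e^{-(r\log 2/4)k}$ and $e^{-a}+e^{-b}\le 2e^{-\min(a,b)}$. For $m>m_\ast$ I would write $T^mf=T^{m-m_\ast}(T^{m_\ast}f)$; since $(1-c\lambda^{2\tau})^{m_\ast}\lesssim e^{-rk}\le 2^{-rk/4}$, the above bounds $\Vert T^{m_\ast}f\Vert_2\lesssim C_r'(\lambda)\,2^{-rk/4}\Vert f\Vert_2$, and Lemma \ref{Lemma1} applied to the factor $T^{m-m_\ast}$ keeps the same bound up to a further factor $C(\lambda)$; moreover $\min(c\lambda^{2\tau}m,\,(r\log 2/4)k)=(r\log 2/4)k$ throughout this range. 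In both cases one reaches $\Vert T^mf\Vert_2\le C(\lambda)\,e^{-\min(c\lambda^{2\tau}m,\,(r\log 2/4)k)}\Vert f\Vert_2$, which is (2.15) after a harmless relabelling of $r$ (legitimate since $r\ge 1$ is arbitrary in the statement).

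The hard part, and the only place requiring more than routine estimates, is the first displayed bound and its uniformity over the long range $j\lesssim\lambda^{-2\tau}k$: one has to see that the Fourier mass of $T^jf$ leaks below frequency $K$ only at the exponentially small rate $2^{-rk}$ dictated by the oscillatory-integral estimate (2.12), and that the accompanying cocycle-norm growth $(1+C\lambda)^{O(j)}$ remains negligible against that gain precisely because the spectral-gap scale $\lambda^{2\tau}$ dominates the norm scale $\lambda$ when $\tau<\tfrac12$. Note that one cannot instead iterate (2.12) through the projections $I-P_K$, since these destroy high-frequency localization; this is why $\Vert P_K(T^jf)\Vert_2$ is estimated in one stroke from the word expansion of $T^j$.
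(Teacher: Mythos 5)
Your proposal is correct and follows essentially the same route as the paper: the paper likewise reduces to $m\lesssim r\lambda^{-2\tau}k$ via Lemma \ref{Lemma1}, bounds the Fourier coefficients of $T^mf$ below frequency $K$ in one stroke from the word expansion and the oscillatory-integral estimate (2.12) (absorbing the cocycle growth $e^{Cr|\lambda|m}$ into the gain $2^{-(r-\frac12)k}$ using $\tau<\frac12$), and then iterates the resulting one-step inequality combining this low-frequency smallness with the contraction (2.9) on $V^\bot$. The only cosmetic difference is that you track the high-frequency component $v_j$ in the recursion while the paper's (2.17) tracks the full norm $\Vert F_m\Vert_2$; the two are equivalent.
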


\begin{proof}
In view of Lemma 1, it suffices to establish (2.15) for $m<C\lambda^{-2\tau} rk$.

Set $F_m=T^mf$ and decompose $F_m =P_KF_m+(F_m-P_K F_m)=F_m^{(1)}+ F_m^{(2)}$.
Then, using (2.12)
$$
\begin{aligned}
|\hat F_m(n)|& \, \leq \max_{g\in W_m} \sum_{|n'|> 2^k}|\hat f(n')| \ |\widehat{e(n'\tau_g)} (n)|\\
&\, \ll_r |n|^r e^{C|\lambda|rm} \sum_{|n'|> 2^k} |\hat f(n')| \ |n'|^{-r}\\
& \, \ll_r |n|^r e^{Cr|\lambda|m} \  2^{-k(r-\frac 12)} \Vert f\Vert_2\\
\end{aligned}
$$
$$
\qquad\qquad\qquad\qquad\qquad
 \ll_r |n|^r\Big(e^{Cr|\lambda|^{1-2\tau}}\frac 1{\sqrt 2}\Big)^{rk} \Vert f\Vert_2 <|n|^r e^{-\frac 1{10} rk}\Vert f\Vert_2 \ \eqno{(2.16)}
$$
by the assumption on $m$ and $\lambda$ sufficiently small $(\tau<\frac 12)$.

Thus
$$
\Vert F_m^{(1)} \Vert_\infty\leq \sqrt{2K} \Vert F_m^{(1)}\Vert_2\leq CK^{r+ 1} \, e^{-\frac 1{10} rk}\Vert f\Vert_2.
$$
Estimate
\begin{align}
\Vert F_{m+1} \Vert_2 &\leq \Vert TF_{m}^{(1)}\Vert_\infty +\Vert TF_m^{(2)}\Vert_2\nonumber\\
&\leq CK^{r+1} e^{-\frac 1{10} rk}\Vert f\Vert_2 +(1-c\lambda^{2\tau}) \Vert F_m\Vert_2\tag{2.17}
\end{align}
where we used again (2.9).

Iteration of (2.17) with $m<Cr\lambda^{-2\tau}k$ gives
$$
\Vert F_m\Vert_2 \leq [C_r(\lambda) e^{-\frac 1{10} rk}+ e^{-c\lambda^{2\tau}}m]\Vert f\Vert_2.
$$
This proves (2.15).
\end{proof}

Next, we establish bounds on higher Sobolev norms.

\begin{lemma}\label{Lemma3}
For $s\in \mathbb Z_+$, $|\lambda|<\lambda(s)$, we have for $f\in H^s(\mathbb T)$
$$
\Vert T^m f\Vert_{H^s} \leq C(\lambda)\Vert f\Vert_2 + e^{-c(\lambda)m}\Vert f\Vert_{H^s}.\eqno{(2.18)}
$$
In particular
$$
\Vert T^m f\Vert_{H^s}\leq C\Vert f\Vert_{H^s}.
$$
\end{lemma}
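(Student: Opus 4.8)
The plan is to argue by induction on $s$, the base case $s=0$ being Lemma 1, and to reduce the inductive step to a commutator estimate between $T$ and the differentiation operator $D=\frac{d}{d\theta}$. The key point is that, by the Figotin--Pastur normalization (0.15)--(0.17), each $\tilde g_\pm$ is an $O(\lambda)$ perturbation of a rotation matrix, so the circle map $\tau_{\tilde g_\pm}$ is $O(\lambda)$-close in $C^s$ to a rigid rotation $\theta\mapsto\theta+\text{const}$; thus $\tau_{\tilde g_\pm}'=1+O(\lambda)$ and $\tau_{\tilde g_\pm}^{(i)}=O(\lambda)$ for $2\le i\le s$, which sharpens the crude bounds (2.3), (2.5), (2.6). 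Applying the Fa\`a di Bruno formula to $D^{s}(h\circ\tau_{\tilde g_\pm})$, the only term not carrying a factor $\tau_{\tilde g_\pm}^{(i)}$ with $i\ge2$ is $(\tau_{\tilde g_\pm}')^{s}(h^{(s)}\circ\tau_{\tilde g_\pm})$, while $(\tau_{\tilde g_\pm}')^{s}-1=O(\lambda)$ and every remaining term involves at most $s-1$ derivatives of $h$ together with an $O(\lambda)$ coefficient. Combining this with the change-of-variables bound $\Vert h^{(j)}\circ\tau_{\tilde g_\pm}\Vert_2\le(1+O(\lambda))\Vert h^{(j)}\Vert_2$, I would obtain
$$
\Vert D^{s}(Th)-T(D^{s}h)\Vert_2\le C_s\,\lambda\,\Vert h\Vert_{H^s}\qquad(h\in H^{s}(\mathbb T)),
$$
with $C_s$ depending only on $s$.

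Next I would control the low frequencies of $w_m:=D^{s}(T^{m}f)$. Since $\widehat{w_m}(n)=(2\pi i n)^{s}\widehat{T^{m}f}(n)$, one has $|\widehat{w_m}(n)|\le(2\pi K)^{s}\Vert T^{m}f\Vert_2$ for $|n|<K$, whence by Lemma 1
$$
\Vert P_K w_m\Vert_\infty\le 2K(2\pi K)^{s}\,C(\lambda)\,\Vert f\Vert_2=:C_1(\lambda)\,\Vert f\Vert_2 .
$$
Writing $w_m=T w_{m-1}+\big(D^{s}(T T^{m-1}f)-T(D^{s}T^{m-1}f)\big)$, splitting $w_{m-1}=P_K w_{m-1}+(I-P_K)w_{m-1}$, and using (2.11) on the first piece, the spectral gap (2.9) on the second (which lies in $V^{\bot}$), the commutator estimate above together with the norm equivalence $\Vert T^{m-1}f\Vert_{H^s}\lesssim\Vert w_{m-1}\Vert_2+\Vert T^{m-1}f\Vert_2\lesssim\Vert w_{m-1}\Vert_2+C(\lambda)\Vert f\Vert_2$, I get, for $\lambda<\lambda(s)$,
$$
\Vert w_m\Vert_2\le C_1(\lambda)\Vert f\Vert_2+(1-c\lambda^{2\tau})\Vert w_{m-1}\Vert_2+C_s'\lambda\big(\Vert w_{m-1}\Vert_2+C(\lambda)\Vert f\Vert_2\big).
$$
Because $\tau<\tfrac12$, we have $\lambda=o(\lambda^{2\tau})$ as $\lambda\to0$, so the term $C_s'\lambda\Vert w_{m-1}\Vert_2$ is absorbed into the contraction factor, giving $\Vert w_m\Vert_2\le C_2(\lambda)\Vert f\Vert_2+(1-\tfrac12 c\lambda^{2\tau})\Vert w_{m-1}\Vert_2$.

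Iterating this geometric recursion from $\Vert w_0\Vert_2=\Vert D^{s}f\Vert_2\le\Vert f\Vert_{H^s}$ yields
$$
\Vert D^{s}(T^{m}f)\Vert_2\le C(\lambda)\Vert f\Vert_2+(1-\tfrac12 c\lambda^{2\tau})^{m}\Vert f\Vert_{H^s}\le C(\lambda)\Vert f\Vert_2+e^{-c(\lambda)m}\Vert f\Vert_{H^s}
$$
with $c(\lambda)\sim\lambda^{2\tau}$; adding the Lemma 1 bound $\Vert T^{m}f\Vert_2\le C(\lambda)\Vert f\Vert_2$ and using $\Vert\cdot\Vert_{H^s}\sim\Vert\cdot\Vert_2+\Vert D^{s}\cdot\Vert_2$ gives (2.18), and the displayed ``in particular'' follows at once from $\Vert f\Vert_2\le\Vert f\Vert_{H^s}$. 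I expect the first step to be the main obstacle: one must extract a genuine factor $\lambda$ from the commutator $[T,D^{s}]$ rather than the $O(1)$ bound that (2.3) gives for a general $g\in W_\ell(g_+,g_-)$, and it is precisely the passage to the rotation-perturbation coordinates (0.17) that buys this gain; the commutator gain $O(\lambda)$ must then dominate the spectral-gap size $\lambda^{2\tau}$, which is why the restriction $\tau<\tfrac12$ is imposed.
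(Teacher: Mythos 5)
Your argument is correct, but it follows a genuinely different route from the paper. The paper decomposes $f$ at a large frequency cutoff $K_1=2^{m_0}$, uses Lemma 2 to get $\Vert T^{m_0}(I-P_{K_1})\Vert_{2\to 2}\leq C(\lambda)e^{-c\lambda^{2\tau}m_0}$, pairs this with the crude growth bound $\Vert T^{m_0}\Vert_{H^{s}\to H^{s}}\leq C_s e^{C\lambda s m_0}$ coming from (2.3), and interpolates to make $T^{m_0}(I-P_{K_1})$ a contraction by $\tfrac1{10}$ on $H^s$; the lemma then follows by iterating in blocks of length $m_0$. You instead never invoke Lemma 2: your engine is the commutator bound $\Vert [D^s,T]h\Vert_2\leq C_s\lambda\Vert h\Vert_{H^s}$, which requires the sharper information that \emph{all} derivatives $\tau_{\tilde g_\pm}^{(i)}$, $2\le i\le s$, are $O(\lambda)$ (not merely bounded as in (2.3)) — a legitimate consequence of (0.17) and the smoothness of $g\mapsto\tau_g$, and a natural extension of the paper's own observation (2.6). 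With that in hand you run the Lemma 1 recursion directly on $w_m=D^sT^mf$, using (2.11) on $P_Kw_{m-1}$ and the gap (2.9) on $(I-P_K)w_{m-1}$. Both arguments ultimately rest on the same numerical fact, $\lambda\ll\lambda^{2\tau}$ for $\tau<\tfrac12$ and $\lambda$ small: the paper extracts it through the interpolation exponents, you through absorbing $C_s\lambda\Vert w_{m-1}\Vert_2$ into the contraction factor $1-c\lambda^{2\tau}$. Your version buys a self-contained proof independent of Lemma 2 and makes the mechanism (near-rotation conjugates commute well with differentiation) explicit; the paper's version is softer in that it needs only the operator-norm bound (2.3) on words and recycles Lemma 2, at the price of an interpolation step. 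One cosmetic remark: you announce an induction on $s$, but since your commutator estimate is stated with the full $\Vert h\Vert_{H^s}$ on the right and you use the equivalence $\Vert\cdot\Vert_{H^s}\sim\Vert\cdot\Vert_2+\Vert D^s\cdot\Vert_2$, the argument is in fact direct and no induction is needed.
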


\begin{proof}

Apply Lemma 2 with $m=m_0(\lambda)$ to specify, $K_1 =2^{m_0}$, to obtain
$$
\Vert T^{m_0} (I-P_{K_1})\Vert_{2\to 2} \leq C(\lambda) e^{-c\lambda^{2\tau}m_0}\eqno{(2.20)}
$$
while on the other hand for $s\in\mathbb Z_+$
\begin{align}
\Vert T^{m_0}(I-P_{K_1})\Vert_{H^s\to H^s} \leq \Vert T^{m_0}\Vert_{H^s\to H^s} &< C_s\max_{g\in W_{m_0}}\Vert g\Vert^{2s}\nonumber\\
&< C_s e^{C\lambda sm_0}.\tag{2.21}
\end{align}

Assuming $\lambda$ sufficiently small and taking $m_0=m_0(\lambda, s)$, interpolation between (2.20), (2.21) will imply that
$$
\Vert T^{m_0} (I-P_{K_1})\Vert_{H^s\to H^s}<\frac 1{10}.\eqno{(2.22)}
$$
Set $F_m=T^mf$.
Then
$$
\begin{aligned}
\Vert F_{m+m_0}\Vert_{H^s}&\leq \Vert T^{m_0} P_{K_1} F_m\Vert_{H^s}+\Vert T^{m_0} (I-P_{K_1}) F_m\Vert_{H^s}\\
&\overset{(2.22)}\leq C(\lambda)K^s_1 \Vert F_m\Vert_2 +\frac 1{10} \Vert F_m\Vert_{H^s}\\
\end{aligned}
$$
$$
\overset{(2.13)} \leq C(\lambda)\Vert f\Vert_2 +\frac 1{10} \Vert F_m\Vert_{H^s}.\eqno{(2.23)}
$$
Iteration of (2.23) implies (2.18).
\end{proof}

Lemmas 1, 2, 3 hold for $\tilde T$ defined in (2.10).
If we define now $T$ by
$$
Tf= \frac 13 \big(f+(f\circ \tau_{g_+}) +(f\circ \tau_{g-})\big)\eqno{(2.24)}
$$
clearly $T$ and $\tilde T$ are related by
$$
\tilde T f=\big(T(f\circ \tau_S)\big) \circ \tau_{S^{-1}}
$$
with $S$ given by (0.16).
Thus $\tau_S$ intertwines $T^m$ and $(\tilde T)^m$, Lemma 3 remains valid for the original $T$ given by (2.24).

Let $\mu$ be the probability measure on $SL_2(\mathbb R)$ defined by
$$
\mu=\frac 12(\delta_{g_+}+ \delta_{g_-}).\eqno{(2.25)}
$$
The Furstenberg measure $\nu$ is the (unique) $\mu$-stationary measure on $P_1(\mathbb R)\simeq \mathbb T$, i.e. satisfying
$$
\nu=\sum_g (\tau_g)_*[\nu]\mu(g).\eqno{(2.26)}
$$
For $f\in C^1(\mathbb T)$, one has large deviation inequalities (cf. \cite{B-L}) of the form
$$
\Big\Vert\sum_g(f\circ \tau_g)\mu^{(\ell)}(g) -\int fd\nu\Big\Vert_\infty \leq C e^{-c(\lambda)\ell} \Vert f\Vert_{C^1}.
\eqno{(2.27)}
$$
Since
$$
T=\frac 13 I+\frac 23\sum(\tau_g)_* \mu(g)
$$
$$
T^\ell=3^{-\ell}\sum^\ell_{m=0} \begin{pmatrix} \ell\\ m\end{pmatrix} 2^m\Big(\sum (\tau_g)_*\mu^{(m)}(g)\Big).\eqno{(2.28)}
$$
\medskip

Combined with (2.27), this gives

\begin{lemma}\label{Lemma4}
$$
\Vert T^\ell f-\int fd\nu\Vert_\infty \leq C(\lambda) e^{-c(\lambda)\ell}\Vert f\Vert_{C^1}.\eqno{(2.29)}
$$
\end{lemma}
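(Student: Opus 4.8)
The plan is to feed the binomial expansion (2.28) of $T^\ell$ into the large deviation bound (2.27) term by term. Write $M=\sum_g(\tau_g)_*\mu(g)$, so that $T=\tfrac13 I+\tfrac23 M$ and $M^m=\sum_g(\tau_g)_*\mu^{(m)}(g)$; applying this operator identity to $f$ and using the normalization $3^{-\ell}\sum_{m=0}^\ell\binom{\ell}{m}2^m=1$ (so that $\int f\,d\nu=3^{-\ell}\sum_{m=0}^\ell\binom{\ell}{m}2^m\int f\,d\nu$) gives
$$
T^\ell f-\int f\,d\nu=3^{-\ell}\sum_{m=0}^\ell\binom{\ell}{m}2^m\Big(\sum_g(f\circ\tau_g)\,\mu^{(m)}(g)-\int f\,d\nu\Big).
$$
I would then split the sum at the threshold $m_0=\lfloor\ell/2\rfloor$ (any fixed fraction of $\ell$ strictly between $0$ and $2/3$ would do equally well).

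For the block $m\ge m_0$, the large deviation inequality (2.27) bounds each inner term by $Ce^{-c(\lambda)m}\Vert f\Vert_{C^1}\le Ce^{-c(\lambda)m_0}\Vert f\Vert_{C^1}$; since the weights $3^{-\ell}\binom{\ell}{m}2^m$ are nonnegative and sum to at most $1$, this block contributes at most $Ce^{-c(\lambda)\ell/2}\Vert f\Vert_{C^1}$. For the block $m<m_0$ I just use $\Vert\sum_g(f\circ\tau_g)\mu^{(m)}(g)\Vert_\infty\le\Vert f\Vert_\infty$ and $|\int f\,d\nu|\le\Vert f\Vert_\infty$, so each inner term is $\le 2\Vert f\Vert_\infty\le 2\Vert f\Vert_{C^1}$ and the block contributes at most $2\Vert f\Vert_{C^1}\cdot 3^{-\ell}\sum_{m<\ell/2}\binom{\ell}{m}2^m$. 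The key point is that this last quantity equals the probability that a $\mathrm{Bin}(\ell,2/3)$ variable lies below $\ell/2$, hence below its mean $2\ell/3$, so a Chernoff bound gives $3^{-\ell}\sum_{m<\ell/2}\binom{\ell}{m}2^m\le e^{-c_0\ell}$ for an absolute constant $c_0>0$. Adding the two blocks yields (2.29) with $c(\lambda)$ replaced by $\min\{c(\lambda)/2,c_0\}$ and an adjusted $C(\lambda)$.

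The only genuinely nontrivial input is the large deviation estimate (2.27) for the $\mu$-random walk on $P_1(\mathbb R)$, which is quoted from \cite{B-L}; the rest is the elementary bookkeeping above. The one place to be careful is the choice of threshold: it must be a fixed fraction of $\ell$ lying strictly below the drift $2/3$, so that simultaneously the binomial tail controlling the small block is exponentially small in $\ell$ and the exponent $c(\lambda)m_0$ gained from (2.27) in the large block is a definite fraction of $c(\lambda)\ell$. The choice $m_0=\lfloor\ell/2\rfloor$ achieves both, and this balancing is really the whole content of the argument.
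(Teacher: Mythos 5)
Your argument is correct and follows essentially the same route as the paper: both feed the binomial expansion (2.28) into the large deviation bound (2.27) term by term. The only difference is cosmetic — where you split at $m_0=\lfloor \ell/2\rfloor$ and invoke a Chernoff bound for the lower binomial tail, the paper simply evaluates the full weighted sum in closed form, $3^{-\ell}\sum_m\binom{\ell}{m}2^m e^{-c(\lambda)m}=\bigl(\tfrac13+\tfrac23 e^{-c(\lambda)}\bigr)^\ell$, which is exponentially small since the base is strictly less than $1$.
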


\begin{proof}
L.h.s. of (2.29) is bounded by
$$
C\Vert f\Vert_{C^1} 3^{-\ell} \sum^\ell_{m=0} \begin{pmatrix} \ell\\ m\end{pmatrix} 2^m e^{-c(\lambda)m}
< C\Vert f\Vert_{C^1} \Big(\frac 23+\frac 13 \ e^{-c(\lambda)}\Big)^\ell.
$$
\end{proof}

\begin{lemma}\label{Lemma5}
For $s \geq 1$ and $f\in H^{s+1}$
$$
\Vert (T^\ell f)' \Vert_{H^s} \leq C(\lambda) e^{-c(\lambda)\ell} \Vert f\Vert_{H^{s+1}}.
$$
\end{lemma}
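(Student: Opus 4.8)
The plan is to combine the contractive Sobolev estimate of Lemma~3 with the exponential decay of $T^m$ on high frequencies from Lemma~2, exploiting the fact that differentiation essentially commutes with $T$ up to lower-order terms. First I would observe that since $\tau_{g_\pm}$ are $O(\lambda)$-perturbations of a rotation, differentiating $Tf$ produces
$$
(Tf)' = \tfrac13\big(f' + (\tau_{g_+}')\cdot(f'\circ\tau_{g_+}) + (\tau_{g_-}')\cdot(f'\circ\tau_{g_-})\big),
$$
so $(Tf)' = T_1(f') + E(f)$ where $T_1$ is a transfer-type operator of the same kind as $T$ (with the weights $\tau_{g_\pm}'$ inserted) and the error $E(f)$ involves $\tau_{g_\pm}''$, hence is $O(\lambda)$ in operator norm from $H^s$ to $H^s$ and, crucially, only costs one derivative. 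Iterating, $(T^\ell f)'$ is a sum of a ``main'' term $T_1^\ell(f')$ plus $\ell$ error terms, each of the shape $T_1^{j}\,E\,(T^{\ell-1-j}f)'$ or similar.

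Next I would handle the main term $T_1^\ell(f')$. Because $T_1$ has the weights $\tau_{g_\pm}'=1+O(\lambda)$, it satisfies the same restricted spectral gap (2.9) and the same high-frequency decay as $T$ — this is exactly the content of Lemma~2 applied with $T_1$ in place of $T$, which is legitimate since the proof of Lemma~2 only used (2.9) and the Fourier-decay bound (2.12), both of which hold for $T_1$. Splitting $f'$ into its low-frequency part $P_{K}f'$ and high-frequency part, on the high-frequency part Lemma~2 gives exponential decay $e^{-\min(c\lambda^{2\tau}\ell,\,rk)}$, which for $\ell$ in the relevant range beats $e^{-c(\lambda)\ell}$; on the finitely many low frequencies $P_{K}f'$, the operator $T_1$ restricted to that fixed finite-dimensional space has all its nontrivial eigenvalues strictly inside the unit disk (again by (2.9), once $V$ is enlarged to contain $P_K$), so $T_1^\ell P_K$ decays like $e^{-c(\lambda)\ell}$ as an operator on $L^2$, hence on $H^s$ up to a factor $K^s=C(\lambda)$. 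Altogether $\|T_1^\ell(f')\|_{H^s}\le C(\lambda)e^{-c(\lambda)\ell}\|f'\|_{H^s}\le C(\lambda)e^{-c(\lambda)\ell}\|f\|_{H^{s+1}}$.

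Finally I would control the error terms by a telescoping/bootstrapping argument in $\ell$. Write $a_\ell=\|(T^\ell f)'\|_{H^s}$. From the decomposition above one gets a recursion of the form $a_{\ell+1}\le \|T_1 (\cdot)'\text{-part}\| + C\lambda\,\|(T^\ell f)'\|_{H^s} + (\text{main-term contribution})$; more carefully, each error term $E(T^{j}f)$ costs one derivative but is multiplied by $\lambda$, and is then propagated by powers of $T_1$ which are uniformly bounded on $H^s$ by Lemma~3. Summing the geometric-type series in $\lambda$, and using Lemma~3 to bound the intermediate $\|T^j f\|_{H^{s+1}}\le C\|f\|_{H^{s+1}}$, the accumulated error is $O(\lambda)\cdot C(\lambda)\|f\|_{H^{s+1}}$ times the same exponential factor, which for $\lambda$ small is absorbed. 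The main obstacle is bookkeeping the commutator $[T,\,d/dx]$ cleanly: one must check that inserting the smooth weights $\tau_{g_\pm}'$ never increases the Sobolev order and that the error genuinely carries a gain of $\lambda$ uniformly in $s$, so that the $C(\lambda)$ constants do not blow up through the iteration — this is where the $O(\lambda)$-perturbation-of-a-rotation structure (2.5)–(2.6), valid precisely because $|E|<2-\delta$, is essential.
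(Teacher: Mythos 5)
There is a genuine gap at the heart of your argument: the claimed exponential decay of $T_1^\ell$ on the \emph{low}-frequency component. The restricted spectral gap (2.9) only asserts contraction on $V^\perp$, i.e.\ on frequencies $|n|\geq K$; it gives no information whatsoever about the action of $T$ (or of your weighted operator $T_1$) on the finite-dimensional space $V$, and ``enlarging $V$ to contain $P_K$'' does not change that. Moreover $T_1$ does not preserve the range of $P_K$, so ``the eigenvalues of $T_1$ restricted to that space'' is not even well defined, and a bound on the spectrum of $P_KT_1P_K$ would not control $T_1^\ell P_K$ in any case. Asserting that the low modes decay exponentially is essentially assuming the conclusion: that decay is exactly the quasi-compactness statement one is trying to prove, and it cannot come from (2.9). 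The missing input is Lemma 4 (the large-deviation estimate $\Vert T^\ell f-\int f\,d\nu\Vert_\infty\leq Ce^{-c(\lambda)\ell}\Vert f\Vert_{C^1}$, which rests on the Furstenberg/Bougerol--Lacroix theory, not on the expansion machinery); you never invoke it, and without it the low-frequency part of $f'$ simply does not decay in your scheme. A secondary, more minor issue is the commutator bookkeeping: since $(f\circ\tau_g)'=\tau_g'\cdot(f'\circ\tau_g)$ exactly, one has $(T^\ell f)'=T_1^\ell(f')$ with no error term, but then one must establish Lemmas 1--3 afresh for the weighted operator $T_1$, which your sketch only gestures at.

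For contrast, the paper's proof sidesteps all of this. It writes $\Vert (T^\ell f)'\Vert_{H^s}\leq \Vert T^\ell f-\int f\,d\nu\Vert_{H^{s+1}}$ (the constant has zero derivative), factors $T^\ell=T^{\ell_1}T^{\ell-\ell_1}$ using $T1=1$, and applies the smoothing estimate (2.18) of Lemma 3 to $g=T^{\ell-\ell_1}f-\int f\,d\nu$: this bounds the $H^{s+1}$ norm by $C(\lambda)\Vert g\Vert_2+e^{-c(\lambda)\ell_1}\Vert g\Vert_{H^{s+1}}$. The first term is exponentially small by Lemma 4, the second by the uniform $H^{s+1}$ bound of Lemma 3, and taking $\ell_1\sim\ell/2$ finishes. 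If you want to repair your approach, you must import Lemma 4 (or an equivalent equidistribution statement) to handle the low frequencies; at that point you would in effect be reproducing the paper's argument with extra commutator overhead.
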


\begin{proof}
Choose some $\ell_1<\ell$ and write
$$
\begin{aligned}
\Vert (T^\ell f)' \Vert_{H^s}&\leq \Vert T^\ell f-\int fd\nu\Vert_{H^{s+1}}\\
&\leq \Vert T^{\ell_1} (T^{\ell-\ell_1} f-\int fd\nu)\Vert_{H^{s+1}}\\
&\leq C(\lambda)\Vert T^{\ell-\ell_1} f-\int fd\nu\Vert_2+ e^{-c(\lambda)\ell_1} \Vert T^{\ell -\ell_1} f\Vert_{H^{s+1}}
\text { (by Lemma 3)}\\
&\leq C(\lambda) e^{-c(\lambda) (\ell-\ell_1)}\Vert f\Vert_{C^1} + C(\lambda)e^{-c(\lambda)\ell_1}\Vert f\Vert_{H^{s+1}} 
\text{ (by Lemmas 4, 3)}.
\end{aligned}
$$
and (2.30) follows by taking $\ell_1\sim \frac \ell 2$.
\end{proof}

\section
{\bf Smoothness of Lyapounov exponent and density of states}

Recall Thouless' formula
$$
L(E)=\int \log |E-E'| d\mathcal N(E')
$$
which shows that the Lyapounov exponent $L(E)$ and the IDS $\mathcal N(E)$ are related by the Hilbert transform.
Hence it suffices to consider smoothness of $L(E)$.

Recall also that if $\eta$ is the site distribution of $H$, then
\begin{align}
L(E)&= \iint \log \Big\Vert\begin{pmatrix} E-V&-1\\ 1&0\end{pmatrix} \begin{pmatrix} \cos\theta\\ \sin\theta\end{pmatrix}
\Big\Vert \eta(dv) \nu_E(d\theta)\nonumber\\
&= \int \underset\pm {Av} \log\Big\Vert\begin{pmatrix} E\pm\lambda&-1\\ 1&0\end{pmatrix} \begin{pmatrix} \cos\theta\\ \sin\theta
\end{pmatrix} \Big\Vert \nu_E(d\theta)
\end{align}
in the Bernoulli case.  Denote
\be\label{3.2}
\Phi_E(\theta) = \underset{\pm}{Av} \log\Big\Vert\begin{pmatrix} E\pm\lambda& -1\\ 1&0\end{pmatrix} \begin{pmatrix} \cos\theta\\ \sin\theta
\end{pmatrix}\Big\Vert
\ee
which is a smooth function in $(\theta, E)$.

By (3.1) and Lemma 4,
\be\label{3.3}
\Vert L(E)-(T_E)^\ell \Phi_E\Vert_\infty < C e^{-c\ell}
\ee
noting the dependence of $T$ on $E$ (constants in the sequel may depend on $\lambda$).

\medskip

\noindent
{\bf Proof of the Theorem.}

By the preceding, it suffices to show that $L(E)$ is a $C^k$-function of $E$, assuming $\lambda_0$ in (0.3) sufficiently small.

By (3.3), it will suffice to establish bounds on $\partial_E^{(k)} (T_E^{\ell}\Phi_E)$ that are uniform in $\ell$.

Returning to (0.10), let $\mathcal G=\{g_+^{(E)}, g_-^{(E)}, 1\}$.
For $g_1, \ldots, g_\ell \in \mathcal G$, the chain rule gives
\begin{align}\label{3.4}
&\partial _E(\Phi_E \circ \tau_{g_1\ldots g_\ell})=\nonumber\\
&(\partial_E \Phi_E)\circ \tau_{g_1\ldots g_\ell}+\nonumber\\
&\sum^\ell_{m=1} [(\Phi_E\circ\tau_{g_1\ldots g_{{m-1}}})' \circ \tau_{g_{m\ldots g_\ell}}][(\partial_E \tau_{g_m})
\circ \tau_{g_{m+1}} \ldots g_\ell]
\end{align}
where $\partial_E\tau_g=- \sin^2\tau_g$.
Averaging (3.4) gives therefore
\begin{align}\label{3.5}
\partial_E(T^\ell_E \Phi_E)&= T^\ell (\partial _E\Phi_E)\nonumber\\
&-\sum^\ell_{m=1} T^{\ell-m+1} [(T^{m-1} \Phi_E)' \sin^2\theta].
\end{align}
Thus
$$
|(3.5)|<C+\sum^\ell_{m=1} \Vert (T^{m-1} \Phi_E)'\Vert_\infty
$$
and applying Lemma 5 with $f=\Phi_E$ and $s=1$ shows that $\Vert (T^m\Phi_E)' \Vert_\infty \leq C e^{-cm}$.

For $s=2$, one obtains by iteration of \eqref{3.5} expansions of the form
\be\label{3.6}
T^{m_1} \big(\sin^2\theta (T^{m_2} \big(\sin^2\theta(T^{m_3}\Phi_E)'))'\big)
\ee
where $\ell =m_1 +m_2+m_3$.

Again from Lemma 5, applied consecutively for $s=1, s=2$,
\begin{align}
|\eqref{3.6}|&\lesssim \Vert \big(T^{m_2} \big(\sin^2\theta (T^{m_3} \Phi_E)')\big)'\Vert_ {H^1}\nonumber\\
&\lesssim e^{-cm_2} \Vert (T^{m_3}\Phi_E)'\Vert_{H^2}\nonumber\\
&\lesssim e^{-c(m_2+m_3)}.\nonumber
\end{align}
The continuation of the process is clear.

\medskip
\section
{\bf Further comments}

1. \ One could conjecture a restricted spectral gap of the form (0.12) to be valid without arithmetical assumptions on $\lambda$.
This would enable us to show that the density of states of the A-B model is $C^k$-smooth provided the coupling $\lambda\not=0$
is sufficiently small (at least with $E$ restricted as in the above Theorem).
Note that algebraic hypothesis on $\lambda$ appear in two places.
Firstly in the expansion result from \cite{B-Y}, where it is used to establish the non-commutative diophantine property
of the group (see also \cite {B-G}).
In fact weaker properties (such as positive box dimension at appropriate scales) would suffice.
But the only available technique so far is that from \cite{G-J-S} using arithmetic heights.
Secondly, our application of Brenner's result is based on algebraic conjugation.
The conclusion from Proposition 2 is known to fail for certain values of $\mu$ and a complete understanding of
which are the `free' values of $\mu$ seems not available at the present.

2. \ The A-B model may in some sense be viewed as a non-commutative version of the classical Bernoulli convolution
problem about which there is an extensive literature.
Recall that for $0<\lambda<1$, one considers the measure $\nu_\lambda$ obtained from the random series
$$
\sum^\infty_{n=0} v_n \lambda^n\eqno {(4.1)}
$$
where $\{v_n\}$ is a sequence of independent $\pm 1$-valued Bernoulli variables, $\mathbb P(v_n=1)=\mathbb P(v_n+-1)=\frac 12$.
As pointed out in \cite {L-V}, $\nu_\lambda$ is $\mu_\lambda$-stationary, where $\mu_\lambda$ is the probability measure
supported on the two similarities $x\to \lambda x\pm 1$ putting 1/2 mass on each.
A major problem about the measures $\nu_\lambda$ is their absolute continuity.
Starting from the work of Erd\"os, several results on this issue were obtained.
In particular Solomyak \cite {Sol} proved that $\nu_\lambda$ is absolutely continuous for almost all $\lambda>\frac 12$,
while Erd\"os observed that $\nu_\lambda$ is singular if $\lambda^{-1}$ is a Pisot number.
Returning to the A-B model, the situation turns out to be quite different, as our Theorem applies in particular
if $\lambda^{-1}$ is a sufficiently large Pisot number and in this case the Furstenberg measure is absolutely
continuous with $C^k$-density.
The latter statement follows easily from the above analysis indeed (cf. also \cite{B2}).
Let $f\in L^\infty(\mathbb T)$, $|f|\leq 1$ and supp\,$\hat f\subset [-2^{k+1}, -2^k]\cup [2^k, 2^{k+1}]$.
By (2.26), (2.28), $\langle \nu, f\rangle =\langle\nu, T^mf\rangle$ for all $m$.
Taking now $m$ large enough and applying the above Lemmas 3 and 2, it follows that
$$
\Vert T^{2m}f\Vert_\infty \leq C\Vert T^{2m} f\Vert_{H^1}\leq C\Vert T^m f\Vert_1\leq e^{-rk}< C_\lambda^{-k}\eqno {(4.2)}
$$
where $C_\lambda$ can be made arbitrarily large for $\lambda$ small enough.
Hence we obtain $|\langle\nu, f\rangle|< C^{-k}_\lambda$, from where the smoothness claim for $\frac {d\nu}{d\theta}$.

\medskip

\noindent
{\bf Acknowledgment:} The author is grateful to the mathematics
department of UC Berkeley for their hospitality.

\end{document}